\documentclass[11pt]{article}

\usepackage{epsfig}
\usepackage{amssymb, latexsym}
\usepackage{amscd}
\usepackage[all]{xy}
\usepackage{epsf}
\usepackage[mathscr]{eucal}

\usepackage{amsmath,amsfonts,amscd,amssymb,amsthm}

\usepackage[titletoc]{appendix}
\usepackage[sc]{titlesec}

\oddsidemargin=0mm \textwidth=159,2mm

\setcounter{tocdepth}{1}

\long\def\comment#1\endcomment{}

\theoremstyle{plain}

\newtheorem{theorem}{\sc Theorem}[section]
\newtheorem{lemma}[theorem]{\sc Lemma}



\theoremstyle{plain}

\theoremstyle{exercise}

\makeatletter \@addtoreset{equation}{section} \makeatother

\def\eqref#1{\thetag{\ref{#1}}}

\let\latexref=\ref
\def\ref#1{{\normalfont{\latexref{#1}}}}


\setlength{\unitlength}{1pt}
\newcommand{\ldot}{{\:\raisebox{2,3pt}{\text{\circle*{1.5}}}}}
%
%
\newcommand{\udot}{{\:\raisebox{3pt}{\text{\circle*{1.5}}}}}
%
%


\def\dlim_#1{{\displaystyle\lim_{#1}}^\hdot}

\newcommand{\id}{\operatorname{\rm id}}


\newcommand{\Ob}{\mathrm{Ob}}

\renewcommand{\Bar}{\mathrm{Bar}}

\newcommand{\Hom}{\mathrm{Hom}}

\newcommand{\Hoch}{\mathrm{Hoch}}

\newcommand{\dg}{{dg}}

\newcommand{\Id}{\mathrm{Id}}

\newcommand{\Cat}{{\mathscr{C}at}}

\newcommand{\Fun}{{\mathrm{Fun}}}

\renewcommand{\k}{\Bbbk}

\newcommand{\coh}{\mathrm{coh}}
\newcommand{\Coh}{{\mathscr{C}{oh}}}

\newcommand{\sotimes}{\overset{\sim}{\otimes}}

\newcommand{\Hot}{\mathrm{Hot}}

\newcommand{\red}{\mathrm{red}}


\title{\sc{A proof of the contractibility of the 2-operad \\ defined via the twisted tensor product}}

\author{\sc{Boris Shoikhet}}
\date{}

\begin{document}\maketitle
{\footnotesize
\begin{center}{\parbox{4,5in}{{\sc Abstract.}
In our recent papers [Sh1,2], we introduced a {\it twisted tensor product} of dg categories, and provided, in terms of it, {\it a contractible 2-operad $\mathcal{O}$}, acting on the category of small dg categories, in which the ``natural transformations'' are derived. We made use of some homotopy theory developed in [To] to prove the contractibility of the 2-operad $\mathcal{O}$. The contractibility is an important issue, in vein of the theory of Batanin [Ba1,2], according to which an action of a contractible $n$-operad on $C$ makes $C$ a weak $n$-category.

In this short note, we provide a new elementary proof of the contractibility of the 2-operad $\mathcal{O}$. The proof is based on a direct computation, and is independent from the homotopy theory of dg categories (in particular, it is independent from [To] and from Theorem 2.4 of [Sh1]).
}}
\end{center}
}

\section{\sc Introduction}
\subsection{}
Let $\k$ be a field. We denote by $\Cat_\dg(\k)$ the category of small dg categories over $\k$. Its external Hom-set, denoted by $\Hom(-,-)$, is the set of dg functors.

The {\it twisted tensor product} $C\sotimes D$ of small dg categories over $\k$ was introduced in [Sh1]. It was designed to fulfill the adjunction in $\Cat_\dg(\k)$
\begin{equation}
\Hom(C\sotimes D,E)\simeq \Hom(C,\Coh(D,E))
\end{equation}
where $\Coh(D,E)$\footnote{It was denoted by $\Coh_\dg(D,E)$ in [Sh1].} denotes the dg category whose objects are dg functors $f\colon D\to E$, and whose complexes of morphisms are reduced Hochshild cochains on $D$ with coefficients in $D$-bimodule $E(f(-),g(-))$.
$$
\Coh(D,E)(f,g)=\Hoch^\udot_\red(D,E(f(-),g(-)))
$$
We recall the construction of $C\sotimes D$ in Section \ref{section1}.

It is well-known [Fa] that $\Coh(D,E)$ has the homotopy type of internal derived Hom in the homotopy category of dg categories.
We regard closed degree 0 morphisms in $\Coh(C,D)(f,g)$ as ``derived natural transformations'' $f\Rightarrow g$. The ``vertical'' composition $\circ_1$ of such derived natural transformations is the composition in $\Coh(C,D)$. There are two candidates for their ``horizontal'' composition $\circ_0$, which are homotopic (see e.g. [Sh2], Fig.2,3,4). Either of two horizontal compositions gives rise to an associative product. At the same time, the compatibility 
$$(\alpha_1\circ_1\alpha_2)\circ_0(\beta_1\circ_1\beta_2)=(\alpha_1\circ_0\beta_1)\circ_1(\alpha_2\circ_0\beta_2)$$ {\it fails} for either of them. However, it holds up to homotopy, and an action of a contractible 2-operad provides a coherent system of higher homotopies. 

We denote by $\Cat_\dg^\coh(\k)$ the category $\Cat_\dg(\k)$ with the pre-2-category structure given by $\Coh(C,D)(f,g)$ (which means that the strict underlying 1-category $\Cat_\dg(\k)$ is fixed, 2-morphisms are defined, but their compositions are not defined yet).

A contractible 2-operad, acting on $\Cat_\dg^\coh(\k)$, was firstly constructed in [Tam]. The construction was somewhat common in spirit with the solution of the classical Deligne conjecture given by McClure-Smith [MS].

Another contractible 2-operad, solving the same problem, was constructed in [Sh2]. Our idea was that, although on the level of the homotopy category $\Hot(\k)$ of $\Cat_\dg(\k)$ the internal tensor product is isomorphic to the classical one $C\otimes D$, to know it on the level of unlocalised category can be beneficial for depicting non-linear structures such as a 2-operad acting on $\Cat_\dg^\coh(\k)$. On the level of $Cat_\dg(\k)$ the corresponding ``derived tensor product'' $C\sotimes D$, the twisted tensor product, is distinct from $C\otimes D$. In particular, it is not symmetric: $C\sotimes D\not\simeq D\sotimes C$.

For a 2-ordinal $(n_1,n_2,\dots,n_k)$ we define an element of $C(\k)$
\begin{equation}
\mathcal{O}(n_1,\dots,n_k)=I_{n_k}\sotimes(I_{n_{k-1}}\sotimes(\dots\sotimes (I_{n_2}\sotimes I_{n_1})\dots))(\min,\max)
\end{equation}
Here $I_n$ is the interval category with objects $0,1,\dots,n$ and with morphisms defined as $I_n(i,j)=\k$ for $i\le j$ and $I_n(i,j)=0$ for $i>j$, and the composition given by the product in $\k$. Thus, $I_n$ is a dg category all whose morphism are closed and have degree 0. The objects $\min$ and $\max$ are $(0,0,\dots,0)$ and $(n_k,n_{k-1},\dots,n_1)$, correspondingly. 

Our result in [Sh2] states that there is a 2-operad structure [Ba1,2] on the collection of complexes of $\k$-vector spaces $\mathcal{O}(n_1,\dots,n_k)$, and that this 2-operad naturally acts on $\Cat_\dg^\coh(\k)$. In particular, $(I_1\sotimes I_1)(\min,\max)$, corresponded to the horizontal product(s) $\circ_0$, is the complex
$$
0\to \underset{\deg=-1}{\k}\xrightarrow{(1,-1)}\underset{\deg=0}{\k\oplus\k}\to 0
$$
which is interpreted as two candidates for $\circ_0$ and a homotopy between them.

\subsection{}
A 2-operad of complexes $\mathcal{O}$ is called {\it contractible} if for any 2-ordinal $(n_1,\dots,n_k)$ there is a quasi-isomorphism of complexes $\mathcal{O}(n_1,\dots,n_k)\to \k[0]$, which gives rise to a map of 2-operads $\mathcal{O}\to\underline{\k}$. Here $\underline{\k}$ is the constant 2-operad (an algebra over $\underline{\k}$ is a strict 2-category).

According to [Ba1], an action of a contractible 2-operad on a pre-2-category makes it a {\it weak $2$-category}. Thus, contractibility of our 2-operad $\mathcal{O}$, acting on $\Cat_\dg^\coh(\k)$, becomes an important issue. Namely, it makes $\Cat_\dg(\k)$ with complexes of 2-morphisms $f\Rightarrow g\colon C\to D$ defined as $\Coh_\dg(C,D)(f,g)$, a ``weak 2-category''. It implies, in particular [Ba3], that $\Coh_\dg(C,C)(\id,\id)=\Hoch^\dg(C)$ is a $C_\ldot(E_2,\k)$-algebra (the latter is the statement of the ``classical Deligne conjecture''). 

In this paper, we give a new elementary proof of the following statement:
\begin{theorem}\label{theorem1}
The 2-operad $\mathcal{O}$ is contractible. 
\end{theorem}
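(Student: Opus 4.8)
The plan is to deduce the theorem from a single \emph{Key Lemma} that strips off one interval factor at a time, and then to induct on the length $k$ of the $2$-ordinal. The maps realizing contractibility should be induced functorially: the terminal dg category $\pt$ (one object, endomorphism complex $\k$) receives a unique dg functor $I_n\to\pt$ for every $n$, and a short computation from the defining adjunction $\Hom(C\sotimes D,E)\simeq\Hom(C,\Coh(D,E))$ shows $\pt\sotimes\pt\simeq\pt$ (because $\Coh(\pt,E)\cong E$, the reduced Hochschild complex of $\pt$ reducing to its length-$0$ part), and hence that every iterated twisted tensor product of copies of $\pt$ collapses to $\pt$, whose corner Hom-complex is $\k[0]$. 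Applying $\sotimes$ to the functors $I_{n_j}\to\pt$ therefore yields a morphism of complexes
\[
\mathcal{O}(n_1,\dots,n_k)=\bigl(I_{n_k}\sotimes(\dots\sotimes I_{n_1})\bigr)(\min,\max)\longrightarrow \pt(\ldots)=\k[0].
\]
Because the $2$-operad structure maps of $\mathcal{O}$ are assembled from the structure maps of the intervals, with which $I_{n_j}\to\pt$ is automatically compatible ($\pt$ being terminal), these morphisms glue into a map of $2$-operads $\mathcal{O}\to\underline{\k}$. Contractibility thus reduces to the purely homological assertion that each such map is a quasi-isomorphism.

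The Key Lemma to be proved is that, for $W$ an iterated twisted tensor product of interval categories with its distinguished objects $\min_W,\max_W$, the map
\[
(I_n\sotimes W)(\min,\max)\longrightarrow W(\min_W,\max_W)
\]
induced by $I_n\to\pt$ is a quasi-isomorphism. Granting this, the induction is immediate: the base case $\mathcal{O}(n_1)=I_{n_1}(0,n_1)=\k[0]$ is trivial, and the inductive step applies the Key Lemma with $W=I_{n_{k-1}}\sotimes(\dots\sotimes I_{n_1})$, whose target is $\mathcal{O}(n_1,\dots,n_{k-1})\simeq\k[0]$ by hypothesis. To prove the Key Lemma I would first write $(I_n\sotimes W)(\min,\max)$ out explicitly. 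Since $I_n(i,j)=\k$ is one-dimensional and concentrated in degree $0$, the adjunction identifies a dg functor out of $I_n\sotimes W$ with a composable chain of $n$ ``derived natural transformations'' valued in $\Coh(W,-)=\Hoch^\udot_\red(W,-)$; dualizing, $(I_n\sotimes W)(\min,\max)$ becomes a totally combinatorial complex whose generators are ``staircases'' through the grid decorated by reduced Hochschild cochains of $W$, with differential the sum of the $n$ stacked reduced Hochschild differentials and the internal differential of $W$. The collapse map forgets the $I_n$-direction, sending the straightened staircase $0\to1\to\dots\to n$ to the corresponding morphism of $W$.

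To finish the Key Lemma I would exhibit the complement of the collapse map as acyclic via an explicit contracting homotopy: the ``extra'' generators form a bar-type complex of $I_n$ with $W$-coefficients, which is contractible precisely because $I_n$ has an initial (equivalently terminal) object, the homotopy operator sliding every configuration toward the canonical straight staircase, exactly as in the standard contraction of the nerve of a category with a terminal object. The complex $(I_1\sotimes I_1)(\min,\max)=\bigl(\k\xrightarrow{(1,-1)}\k\oplus\k\bigr)$ of the introduction is the first instance: the two degree-$0$ generators are the two staircases, the degree $-1$ generator is the homotopy between them, and the collapse map realizes the isomorphism on $H^0$. The main obstacle is precisely this last step carried out rigorously: organizing the nested, non-symmetric Hochschild differentials so that the signs are correct, establishing the explicit Hom-complex formula for the iterated product inductively, and writing a homotopy that contracts the complement \emph{on the nose} rather than only on an associated graded. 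I expect to handle the signs and the homotopy by filtering $(I_n\sotimes W)(\min,\max)$ by the number of $I_n$-steps (or by total Hochschild length) and contracting one filtration layer at a time, the acyclicity of each layer being the bar-resolution statement above; the genuine bookkeeping lies in checking that these partial homotopies assemble into a single contraction of the full complex.
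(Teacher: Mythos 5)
Your overall architecture coincides with the paper's: both arguments strip off one interval factor at a time and induct on the length $k$, and your Key Lemma is essentially the corner-Hom instance of the paper's Theorem \ref{theorem3}, which asserts that the projection $I_n\sotimes C\to I_n\otimes C$ is a quasi-equivalence for an \emph{arbitrary} dg category $C$. (Your collapse map $(I_n\sotimes W)(\min,\max)\to W(\min_W,\max_W)$ factors through that projection, since the functor induced by $I_n\to\pt$ kills every $\varepsilon(f;g_1,\dots,g_\ell)$, $\ell\ge 1$, by relation $(R_3)$.) The reduction of Theorem \ref{theorem1} to the Key Lemma, the identification of the base case, and the assembly of the collapse maps into a map of $2$-operads $\mathcal{O}\to\underline{\k}$ are all fine and match the paper.

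The gap is in the proof of the Key Lemma, which you correctly single out as the main obstacle but then misdiagnose. The acyclicity you need has nothing to do with $I_n$ having a terminal object, and there is no ``bar-type complex of $I_n$ with $W$-coefficients'' in play: since every morphism $i-1\to j$ in $I_n$ factors \emph{uniquely} into the generators $f_i,\dots,f_j$, the $I_n$-direction is rigid, and all of the homological thickness sits in the $W$-direction. Concretely, $(I_n\sotimes C)((i,*),(j,*))$ is the total complex of $\Bar^{(f_j)}(C)\otimes_C\cdots\otimes_C\Bar^{(f_i)}(C)$, where $\Bar^{(f_m)}(C)$ is spanned by the elements $(\id\otimes g_{\ell+1})*\varepsilon(f_m;g_1,\dots,g_\ell)*(\id\otimes g_0)$ and is isomorphic to the two-sided reduced bar resolution of $C$ as a $C$-bimodule; its acyclicity in positive bar-degree is the standard extra-degeneracy contraction built from the \emph{units of $C$}, valid for any $C$ whatsoever. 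Because each factor is free as a one-sided $C$-module, the tensor product over $C$ is quasi-isomorphic to $C\otimes_C\cdots\otimes_C C=C$, which proves the Key Lemma by routine homological algebra and dissolves the difficulty you anticipate about gluing layer-by-layer homotopies into a single global contraction --- no explicit global homotopy is needed. This formulation also yields the statement for arbitrary $C$, which is how the paper runs the induction, supplemented by the easy Lemma \ref{lemma1} that $\Id\sotimes F$ preserves quasi-equivalences.
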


A proof of Theorem \ref{theorem1}, given in [Sh2], uses some homotopy theory.  It is based on the following result proven in [Sh1, Theorem 2.4]:

\begin{theorem}\label{theorem2}
Let $C,D$ be two cofibrant dg categories (for the Tabuada model structure [Tab]). Than the natural projection $C\sotimes D\to C\otimes D$ is a quasi-equivalence of dg categories. 
\end{theorem}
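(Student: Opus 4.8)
\subsection{Proof proposal}

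Since the functor $p\colon C\sotimes D\to C\otimes D$ is the identity on objects (both sides have object set $\Ob C\times\Ob D$, and $p$ respects units), essential surjectivity on $H^0$ is automatic, and the entire content is that for all objects the map of complexes
\[
p_{(c,d),(c',d')}\colon (C\sotimes D)\big((c,d),(c',d')\big)\longrightarrow C(c,c')\otimes D(d,d')
\]
is a quasi-isomorphism. First I would reduce to the case where $C$ and $D$ are \emph{free} dg categories (underlying graded category freely generated by a graded quiver). This is legitimate: every cofibrant object is a retract of such a free dg category, the functors $-\sotimes D$ and $C\sotimes-$ preserve colimits by the defining adjunction (and $\otimes$ does so classically), and quasi-equivalences are closed under retracts, being the weak equivalences of the Tabuada model structure. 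Running the cell induction over the generating cofibrations, the base case $\emptyset\to\underline{\k}$ is trivial ($\underline{\k}$ merely picks out an object, so $\underline{\k}\sotimes D=D=\underline{\k}\otimes D$ and $p$ is the identity); the one delicate point is that the pushout attaching a generating morphism must be arranged as a homotopy pushout, so that $p$ being a quasi-equivalence is inherited by the glued category.

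For free $C,D$ I would use the explicit model of $C\sotimes D$ recalled in Section \ref{section1}: each Hom-complex is the total complex of a ``twisted bar'' bicomplex graded by bar (Hochschild) length $p\ge 0$, whose $p=0$ summand is exactly $C(c,c')\otimes D(d,d')$ and whose positive-$p$ summands are built from reduced strings of generators together with the coefficient bimodule. The projection $p$ is precisely the quotient onto the $p=0$ summand, so $\Ker(p)$ is the positive bar-degree part (with the differential restricted to it). The goal then becomes: $\Ker(p)$ is acyclic on every Hom-complex, whence $p_{(c,d),(c',d')}$ is a quasi-isomorphism.

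The crux, and the step I expect to be the main obstacle, is this acyclicity, and here freeness of the factors is indispensable. A representative instance is $C=D=\mathcal{J}$, the free dg category on one degree-$0$ generator $f\colon x\to y$: in $(\mathcal{J}\sotimes\mathcal{J})\big((x,x),(y,y)\big)$ there are two composites $u,v$ in degree $0$ and a free generator $h$ with $dh=u-v$, while the target $\mathcal{J}\otimes\mathcal{J}$ has the single class $f\otimes f$; thus $\Ker(p)=\big(\k\langle h\rangle\xrightarrow{\ \sim\ }\k\langle u-v\rangle\big)$ is contractible. In general, the free generators supply explicit null-homotopies for the redundancies among composites, and I would assemble these into a contracting homotopy $s$ (raising bar degree) with $ds+sd=\id$ on $\Ker(p)$. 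It is worth stressing that this is genuinely a consequence of $C,D$ being cofibrant and \emph{cannot} be replaced by comparing internal Homs: the inclusion $\underline{\Hom}(D,E)\hookrightarrow\Coh(D,E)$ (strict natural transformations, i.e. Hochschild $0$-cocycles, into coherent ones) is \emph{not} a quasi-isomorphism for general $E$ -- it is the inclusion of strictly commuting squares into homotopy-commuting ones -- so the quasi-equivalence is not formal from the adjunction and must be proved on the Hom-complexes of $C\sotimes D$ themselves.

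Two technical points must be controlled to make the contracting-homotopy argument rigorous. First, the twisting term links different bar degrees, so the Hom-complex is not an honest bicomplex; I would handle this by checking that the filtration by bar length is exhaustive and, in each fixed internal degree, bounded below (a morphism of bounded degree in a free dg category decomposes into boundedly many generators), which guarantees convergence of the associated spectral sequence and reduces acyclicity of $\Ker(p)$ to its associated graded. Second, the homotopy $s$ must be compatible with the internal differentials of $C$ and $D$, so verifying $ds+sd=\id$ across all bar degrees with the correct Koszul signs is a bookkeeping-heavy computation; this sign-and-convergence check is where I expect the real work to lie. Once acyclicity of $\Ker(p)$ is established, the quasi-isomorphism on Hom-complexes, and hence the quasi-equivalence, follows.
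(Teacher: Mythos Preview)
The paper does not actually prove this theorem. It is quoted from [Sh1, Theorem 2.4], and the paper explicitly says that ``the proof of Theorem~\ref{theorem2} relies on some deep results in homotopy theory of $\Cat_\dg(\k)$, in particular, on [To].'' The argument in [Sh1] is indirect: it goes through the adjunction $\Hom(C\sotimes D,E)\simeq\Hom(C,\Coh(D,E))$ and uses To\"en's identification of the derived internal Hom, rather than computing Hom-complexes of $C\sotimes D$ directly. The entire purpose of the present paper is to \emph{avoid} Theorem~\ref{theorem2}: the author proves the much weaker Theorem~\ref{theorem3} ($C=I_n$, $D$ arbitrary) by an elementary bar-complex computation, and then remarks that ``our elementary reasoning seemingly can not be upgraded to a proof of Theorem~2.''

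Your proposal is precisely an attempt at such an elementary upgrade, so it is a genuinely different route from the one on record. The gap is at the step you yourself flag as the crux. In the paper's proof of Theorem~\ref{theorem3}, the reason the Hom-complex of $I_n\sotimes C$ is tractable is that every morphism of $I_n$ factors \emph{uniquely} as $f_j\circ\cdots\circ f_i$ through a linear chain of generators; relation $(R_4)$ then rewrites every $\varepsilon$ in terms of the $\varepsilon(f_i;-)$, and the Hom-complex becomes the iterated tensor product $\Bar^{(f_j)}(C)\otimes_C\cdots\otimes_C\Bar^{(f_i)}(C)$, whose acyclicity in positive bar degree is classical. For a general free dg category $C$ there is no linear order on generators and no unique factorisation, so the Hom-complex of $C\sotimes D$ is not an iterated $\otimes_D$ of bar resolutions; it is a sum over all words in generators of $C$, with the twisting differential mixing words of different shapes. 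Your sentence ``the free generators supply explicit null-homotopies for the redundancies among composites, and I would assemble these into a contracting homotopy $s$'' is exactly the point where a concrete formula is needed and none is given; the $I_n$ case gives no hint of what $s$ should be once branching and parallel generators appear.

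A secondary issue is the cell-induction reduction. You say the delicate point is that the pushout must be a homotopy pushout; but to know this on the $\sotimes$ side you need $-\sotimes D$ to preserve cofibrations (not merely colimits), and that is an input of the same flavour as [Sh1, Lemma~4.5], not something that follows from the adjunction alone. Either way, the reduction only brings you to free $C$, where the real obstruction above still stands.
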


Theorem \ref{theorem1} easily follows from Theorem \ref{theorem2}, because the categories $I_n$ are cofibrant, and the twisted tensor product of two cofibrant dg categories is cofibrant again (by [Sh1, Lemma 4.5]).

Note that the proof of Theorem \ref{theorem2} relies on some deep results in homotopy theory of $\Cat_\dg(\k)$, in particular, on [To].

\subsection{\sc}
In this paper, we give a short elementary proof of Theorem \ref{theorem1}, independent from any homotopy theory. 

We prove the following statement (which is complementary to Theorem \ref{theorem2}):
\begin{theorem}\label{theorem3}
Let $C$ be any (not necessarily cofibrant) dg category. Then the natural projection $I_n\sotimes C\to I_n\otimes C$, $n\ge 0$, is a quasi-equivalence. 
\end{theorem}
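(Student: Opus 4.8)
\subsection*{\sc A proof proposal}

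The projection $\pi\colon I_n\sotimes C\to I_n\otimes C$ is the identity on objects, since both dg categories have object set $\{0,1,\dots,n\}\times\Ob C$; in particular $\pi$ is bijective, hence essentially surjective, on objects. Thus it suffices to prove that $\pi$ is quasi-fully faithful, i.e. that for all objects $(i,x),(j,y)$ the induced map of complexes
$$\pi\colon (I_n\sotimes C)((i,x),(j,y))\to (I_n\otimes C)((i,x),(j,y))$$
is a quasi-isomorphism. The right-hand side is $C(x,y)$ when $i\le j$ and $0$ when $i>j$.

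First I would unwind the construction of the twisted tensor product recalled in Section \ref{section1} in the special case $D=I_n$. Because all morphisms of $I_n$ are closed, of degree $0$, and one-dimensional, I expect the morphism complex $(I_n\sotimes C)((i,x),(j,y))$ to vanish for $i>j$ (there is no transition in that direction), matching the right-hand side; and for $i\le j$ to be identified with an explicit bar-type complex over $C$ with $j-i$ ``transition layers''. Concretely, this should be a normalized iterated bar construction whose lowest part is built from composable strings $x\to\cdots\to y$ in $C$ and whose higher terms record the reduced Hochschild cochains $T_i,\dots,T_{j-1}$ attached to the elementary steps $i\to i+1\to\cdots\to j$ of $I_n$; this is forced by the fact that the coefficients $\Hoch^\udot_\red(-,-)$ in $\Coh$ are precisely maps out of a (reduced) bar complex. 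Under this identification $\pi$ is exactly the augmentation that totally composes a string into a single morphism of $C(x,y)$ and kills the higher transition terms.

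The heart of the proof is then to show that this augmentation is a quasi-isomorphism. The plan is to exhibit an explicit contracting homotopy of the augmented complex, built from the units $\id_x\in C(x,x)$: inserting an identity at the appropriate (outermost) slot provides the standard ``extra degeneracy'' and yields a chain homotopy $s$ with $ds+sd=\id-\iota\pi$, where $\iota$ includes a morphism of $C(x,y)$ as a length-one string and is a section of $\pi$. For a single transition this is nothing but the acyclicity of the two-sided bar resolution $B(C,C,C)\to C$; for $j-i\ge 2$ I would apply it layer by layer, or equivalently by an induction on $j-i$ that peels off one transition at a time. The decisive point is that this argument uses nothing about $C$ beyond the existence of units, which is exactly why — in contrast with Theorem \ref{theorem2} — no cofibrancy hypothesis on $C$ is needed, and no homotopy theory of $\Cat_\dg(\k)$ enters.

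I expect the main obstacle to be bookkeeping rather than conceptual: pinning down the precise shape of the iterated bar complex produced by the twisted tensor product for $n\ge 2$ (the interaction of the several transition layers with the reducedness/normalization) and tracking the Koszul signs so that $s$ genuinely satisfies $ds+sd=\id-\iota\pi$ \emph{simultaneously} across all layers. Once the complex is correctly set up, the contraction is the classical unit-insertion homotopy and is essentially formal. As an alternative structural route, which I would keep only in reserve, one can note that $-\sotimes C$ and $-\otimes C$ are left adjoints and so preserve the presentation $I_n=I_1\sqcup_{I_0}\cdots\sqcup_{I_0}I_1$, reducing the statement to $n=1$; but justifying that the relevant pushouts of dg categories preserve the quasi-equivalence would reintroduce homotopical input, against the elementary spirit promised for this proof.
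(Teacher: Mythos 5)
Your proposal is correct and follows essentially the same route as the paper: both identify the morphism complex $(I_n\sotimes C)((i,x),(j,y))$ with an iterated tensor product over $C$ of bar complexes (one ``transition layer'' $\Bar^{(f_k)}(C)$ for each generator $f_k$ of $I_n$ between $i$ and $j$) and then show the augmentation onto $I_n(i,j)\otimes C(x,y)$ is a quasi-isomorphism via the standard acyclicity of the bar resolution. The only cosmetic difference is that the paper handles the multi-layer case by invoking the projectivity of $\Bar(C)$ as a one-sided $C$-module rather than writing the unit-insertion homotopy layer by layer, which is the same standard fact.
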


Theorem \ref{theorem1} can be easily deduced from Theorem \ref{theorem3}. At the same time, our elementary reasoning seemingly can not be upgraded to a proof of Theorem 2. 

\vspace{2mm}

We recall the definition and basic properties of the twisted tensor product in Section \ref{section1}, and prove Theorem \ref{theorem3} and then Theorem \ref{theorem1} in Section \ref{section2}.

\subsection*{}
\subsubsection*{\sc Acknowledgements} 
The work was partially supported by the FWO Research Project Nr. G060118N
and by the Russian Academic Excellence Project ‘5-100’.

\section{\sc Reminder on the twisted tensor product}\label{section1}
Here we recall, for reader's convenience, the definition and some basic facts on the twisted tensor product from [Sh1]. 

\subsection{\sc The definition}\label{sectiondeftwist}
Let $C$ and $D$ be two small dg categories over $\k$. We define {\it the twisted dg tensor product} $C\sotimes D$, as follows.

The set of objects of $C\sotimes D$ is $\Ob(C)\times \Ob(D)$. Consider the graded $\k$-linear category $F(C,D)$ with objects $\Ob(C)\times \Ob(D)$, freely generated by the morphisms in $\{C(c_1,c_2)\otimes\id_d\}_{ c_1,c_2\in C, d\in D}$, $\{\id_c\otimes D(d_1,d_2)\}_{c\in C, d_1,d_2\in D}$, and by the new morphisms $\varepsilon(f;g_1,\dots,g_n)$, specified below.

For
$$
c_0\xrightarrow{f}c_1\text{  and  }d_0\xrightarrow{g_1}d_1\xrightarrow{g_2}\dots\xrightarrow{g_n}d_n
$$
chains of composable maps in $C$ and in $D$, correspondingly, with $n\ge 1$, one introduces a morphism
$$
\varepsilon(f;g_1,\dots,g_n)\in \Hom(c_0\times d_0,c_1\times d_n)
$$
of degree
\begin{equation}
\deg \varepsilon(f;g_1,\dots,g_n)=-n+\deg f_1+\sum\deg g_j
\end{equation}

The underlying graded category of $C\sotimes D$ is defined as the quotient of $F(C,D)$ by the two-sided ideal, defined by the following identities:

\begin{itemize}
\item[$(R_1)$]
$(\id_c\otimes g_1)* (\id_c\otimes g_2)=\id_c\otimes (g_1g_2)$, $(f_1\otimes\id_d)*(f_2\id_d)=(f_1f_2)*\id_d$
\item[$(R_2)$] $\varepsilon(f;g_1,\dots,g_n)$ is linear in each argument,
\item[$(R_3)$] 
$\varepsilon(f; g_1,\dots,g_n)=0$ if $g_i=\id_y$ for some $y\in \Ob(D)$ and for some $1\le i\le n$,\\
$\varepsilon(\id_x; g_1,\dots,g_n)=0$ for $x\in\Ob(C)$ and $n\ge 1$,
\item[$(R_4)$]
for any $c_0\xrightarrow{f_1}c_1\xrightarrow{f_2}c_2$ and $d_0\xrightarrow{g_1}d_1\xrightarrow{g_2}\dots\xrightarrow{g_N}d_N$
one has:
\begin{equation}\label{eqsuper}
\varepsilon(f_2f_1;g_1,\dots,g_N)=\sum_{0\le m\le N}(-1)^{|f_1|(|g_{m+1}|+\dots+|g_N|+N-m)}\varepsilon(f_2;g_{m+1},\dots,g_N)\star\varepsilon(f_1;g_1,\dots,g_m)
\end{equation}
\item[$(R_5)$]
for $f=\id_c$ and $g=\id_d$, one has $f\otimes \id_d=\id_c\otimes g\ (=\id_{c\times d})$
\end{itemize}
To make it a dg category, one should define the differential $d\varepsilon(f;g_1,\dots,g_n)$.

For $n=1$ we set:
\begin{equation}\label{eqd0}
\begin{aligned}
\ &(f\otimes \id_{d_1})\star (\id_{c_0}\otimes g)-(-1)^{|f||g|}(\id_{c_1}\otimes g)\star (f\otimes \id_{d_0})=
d\varepsilon (f;g)-\varepsilon(df;g)+(-1)^{|f|+1}\varepsilon(f;dg)
\end{aligned}
\end{equation}
For $n\ge 2$:
\begin{equation}\label{eqd1}
\begin{aligned}
\ &\varepsilon(df;g_1,\dots,g_n)=\\
&d\varepsilon({f;g_1,\dots,g_n})-\sum_{j=1}^n(-1)^{|f|+|g_n|+\dots+|g_{j+1}|+n-j} \varepsilon({f;g_1,\dots,dg_j,\dots,g_n})\big)+(-1)^{|f|+n-1}\Big[\\
&
(-1)^{|f||g_n|+|f|}(\id_{c_1}\otimes g_n)\star \varepsilon({f;g_1,\dots,g_{n-1}})
+(-1)^{|f|+\sum_{i=2}^n(|g_i|+1)+1}\varepsilon({f;g_2,\dots,g_n})\star (\id_{c_0}\otimes g_1)+\\
&\sum_{i=1}^{n-1} (-1)^{|f|+\sum_{j=i+1}^n(|g_j|+1)  } \varepsilon({f;g_1,\dots,g_{i+1}\circ g_i,\dots,g_n})\Big]
\end{aligned}
\end{equation}

One proves that 
$d^2=0$ and that the differential agrees with relations ($R_1$)-($R_5$) above.

It is clear that the twisted tensor product $C\sotimes D$ is functorial in each argument, for dg functors $C\to C^\prime$ and $D\to D^\prime$.

Note that the twisted product $C\sotimes D$ is not symmetric in $C$ and $D$.

It is {\it not} true in general that the dg category $C\sotimes D$ is quasi-equivalent to $C\otimes D$, or that these two dg categories are isomorphic as objects of $\Hot(\Cat_\dg(\k))$. 

Our interest in the twisted tensor product $C\sotimes D$ is explained by the following fact:

Let $C,D,E$ be three small dg categories over $\k$. 
Then there is a 3-functorial isomorphism of sets:
\begin{equation}\label{adj1}
\Phi\colon \Fun_{\dg}(C\sotimes D,E)\simeq \Fun_{\dg}(C,\Coh_\dg(D,E))
\end{equation}

See [Sh1, Theorem 2.2] for a proof.

One also has:

There is a dg functor $p_{C,D}\colon C\sotimes D\to C\otimes D$, equal to the identity on objects, and sending all $\varepsilon(f;\ g_1,\dots,g_s)$ with $s\ge 1$ to 0. See [Sh1, Cor. 2.3].

Note (though we will not be using it) that, for $C,D$ cofibrant, the dg functor $p_{C,D}$ is a quasi-equivalence, see Theorem \ref{theorem2}.

\section{\sc A  proof of Theorem \ref{theorem1}}\label{section2}
We firstly prove Theorem \ref{theorem3}. Then we deduce Theorem \ref{theorem1} by an inductive argument. 

\vspace{2mm}

{\it Proof of Theorem \ref{theorem3}:}

Denote by $f_i\colon i-1\to i$ a generator in $I_n$. Clearly any morphism $i-1\to j$ in $I_n$, $1\le i\le j\le n$ can be (uniquely) expresses as either as $\alpha\cdot f_j\circ f_{j-1}\circ\dots\circ f_i$. Then \eqref{eqsuper} expresses any morphism in $I_n\sotimes C$ of the form $\varepsilon(f; g_1,\dots,g_N)$ as a composition of the morphisms $\varepsilon(f_i;\  g_1,\dots,g_n)$ for the generators $f_i$. Moreover, the underlying $\k$-graded dg category of $I_n\sotimes C$ is generated by the morphisms of the following 3 types: $f_i\otimes \id_c$, $\id_i\otimes g$, $\varepsilon(f_i;\ g_1,\dots,g_n)$. 

Denote by $\Bar^{(f_i)}(C)$ the complex generated by the morphisms 
$(\id_i\otimes g_{\ell+1})*\varepsilon(f_i;\ g_1,\dots,g_\ell)*(\id_{i-1}\otimes g_0)$, $\ell\ge 0$ (where for $\ell=0$ we adapt the convention $\varepsilon(f_i;)=f_i\otimes\id$). As a complex, it is isomorphic to the bar-complex of $C$ (which is a free bimodule resolution of $C$). Then the complex of morphisms 
$(I_n\sotimes C)((i,*),(j,*))$ becomes the total complex of the tensor product
$$
K_{i,j}=\Bar^{(f_j)}(C)\otimes_C \Bar^{(f_{j-1})}(C)\otimes_C\dots\otimes_C\Bar^{(f_i)}(C)
$$
The complex $K_{i,j}$ is acyclic in bar-degrees not equal to 0, because $\Bar(C)$ is acyclic and is projective as a left (corresp., right) $C$-module. 
Thus, the cohomology of $K_{i,j}$ is isomorphic to $I_n(i,j)\otimes C(*,*)$. Theorem \ref{theorem3} is proven. 
\qed

\vspace{2mm}

To deduce Theorem \ref{theorem1}, we need
\begin{lemma}\label{lemma1}
Let $F\colon C\to D$ be a quasi-equivalence of dg categories defined over $\k$. Then $(\Id\sotimes P)\colon I_n\sotimes C\to I_n\sotimes D$ is a quasi-equivalence.
\end{lemma}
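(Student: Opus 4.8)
The plan is to compare the twisted and ordinary tensor products through the commutative square of dg functors
$$
\begin{CD}
I_n\sotimes C @>{\Id\sotimes F}>> I_n\sotimes D \\
@V{p_{I_n,C}}VV @VV{p_{I_n,D}}V \\
I_n\otimes C @>{\Id\otimes F}>> I_n\otimes D,
\end{CD}
$$
whose commutativity follows from the naturality of the projection $p_{-,-}$ and the functoriality of $\sotimes$ in its second argument. Here $\Id\sotimes F$ is the functor induced by $F$ (the ``$P$'' of the statement). By Theorem \ref{theorem3} the two vertical arrows are quasi-equivalences, and both are the identity on objects. Recall that a dg functor is a quasi-equivalence precisely when (i) it induces a quasi-isomorphism on every $\Hom$-complex (quasi-full-faithfulness) and (ii) the induced functor on $H^0$ is essentially surjective; I will verify these two conditions for $\Id\sotimes F$ separately.

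First I would treat the bottom arrow $\Id\otimes F$ by direct inspection. On $\Hom$-complexes one has $(I_n\otimes C)((i,c_1),(j,c_2))=I_n(i,j)\otimes C(c_1,c_2)$, and since $I_n(i,j)$ is either $\k$ in degree $0$ (for $i\le j$) or $0$ (for $i>j$), the map induced by $\Id\otimes F$ is either $F\colon C(c_1,c_2)\to D(Fc_1,Fc_2)$, a quasi-isomorphism by hypothesis, or the zero map of zero complexes; hence $\Id\otimes F$ is quasi-fully-faithful. As the $\Hom$-complex between $(i,d)$ and $(i,Fc)$ reduces to $D(d,Fc)$, essential surjectivity of $F$ on $H^0$ furnishes, for a suitable $c$, an isomorphism $(i,d)\cong(i,Fc)$, so $\Id\otimes F$ is also essentially surjective.

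Next I would transport these conclusions to the top arrow. Passing to cohomology of $\Hom$-complexes in the square and invoking the two-out-of-three property of quasi-isomorphisms of complexes, the fact that both vertical maps and the bottom map are quasi-isomorphisms on all $\Hom$-complexes forces $\Id\sotimes F$ to be quasi-fully-faithful. For essential surjectivity I would use the square on the level of $H^0$-categories: $H^0(p_{I_n,D})\circ H^0(\Id\sotimes F)=H^0(\Id\otimes F)\circ H^0(p_{I_n,C})$. The right-hand side is a composite of essentially surjective functors (using Theorem \ref{theorem3} for $p_{I_n,C}$ and the previous paragraph for $\Id\otimes F$), hence essentially surjective; and since $p_{I_n,D}$ is a quasi-equivalence, $H^0(p_{I_n,D})$ is an equivalence of categories, in particular fully faithful, and a fully faithful functor reflects isomorphisms. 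Therefore $H^0(\Id\sotimes F)$ is itself essentially surjective, completing the verification.

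I do not anticipate a substantial obstacle, since Theorem \ref{theorem3} already carries out all the homological work; the only point requiring care is bookkeeping the two defining conditions of a quasi-equivalence separately across the square, the quasi-fully-faithful part being a clean consequence of two-out-of-three for complexes and the essential-surjectivity part being transported through the object-preserving equivalences induced by $p_{I_n,C}$ and $p_{I_n,D}$. As an alternative avoiding the square, one could argue directly that $F$ induces a quasi-isomorphism $\Bar^{(f_i)}(C)\to\Bar^{(f_i)}(D)$ and that, by the projectivity of the bar complex over $C$ (resp.\ $D$) exploited in the proof of Theorem \ref{theorem3}, the induced map on the iterated tensor products $K_{i,j}$ remains a quasi-isomorphism.
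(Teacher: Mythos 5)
Your proposal is correct and follows essentially the same route as the paper: the same commutative square comparing $\Id\sotimes F$ with $\Id\otimes F$ via the projections $p_{I_n,C}$ and $p_{I_n,D}$, with Theorem \ref{theorem3} supplying the vertical quasi-equivalences and a two-out-of-three argument concluding. You merely spell out in more detail the two steps the paper leaves as ``elementary reasoning'' (the bottom arrow and the transport of essential surjectivity), which is harmless and arguably a useful expansion.
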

\begin{proof}
Consider the commutative diagram
$$
\xymatrix{
I_n\sotimes C\ar[r]^{\Id\sotimes F}\ar[d]_{p_C}&I_n\sotimes D\ar[d]^{p_D}\\
I_n\otimes C\ar[r]^{\Id\otimes F}&I_n\otimes D
}
$$
In this diagram, $p_C$ and $p_D$ are quasi-equivalences, by Theorem \ref{theorem3}. The lower horizontal arrow is a quasi-equivalence by elementary reasoning. Therefore, the upper horizontal arrow also is. 
\end{proof}
Theorem \ref{theorem1} follows from Theorem \ref{theorem3} and Lemma \ref{lemma1} by an elementary induction argument. 

\qed

\bigskip

{\small
\noindent {\sc Universiteit Antwerpen, Campus Middelheim, Wiskunde en Informatica, Gebouw G\\
Middelheimlaan 1, 2020 Antwerpen, Belgi\"{e}}}
\bigskip

{\small
\noindent{\sc Laboratory of Algebraic Geometry,
National Research University Higher School of Economics,
Moscow, Russia}}

\bigskip

\noindent{{\it e-mail}: {\tt Boris.Shoikhet@uantwerpen.be}}

\end{document}